\newif\ifAMS
\AMStrue\usepackage{amssymb}}
\theoremstyle{plain}
\newtheorem{Thm}{Theorem}[section]
\newtheorem{Cor}[Thm]{Corollary}
\newtheorem{Lem}[Thm]{Lemma}
\theoremstyle{definition}
\newtheorem{Def}{Definition}
\theoremstyle{remark}
\newtheorem{Rem}{Remark}
\newcommand{\interior}{^{ \kern-5pt ^\circ}}
\newcommand {\bd}{\partial}
\begin{document}
\title
{Codimension one subgroups and boundaries of hyperbolic groups}

\author
{Thomas Delzant and Panos Papasoglu  }

\subjclass{  }

\email [Thomas Delzant]{delzant@math.u-strasbg.fr} \email [Panos
Papasoglu]{panos@math.uoa.gr}

\address
[Thomas Delzant] {Institut de Recherche Math\'ematique Avanc\'ee,
Universit\'e Louis Pasteur et CNRS, 7 rue Ren\'e Descartes, 67084
Strasbourg Cedex, France  }
\address
[Panos Papasoglu] {Mathematics Department, University of Athens,
Athens 157 84, Greece }
\thanks {We acknowledge support from the French-Greek grant Plato}
\begin{abstract}
We construct hyperbolic groups with the following properties: The
boundary of the group has big dimension, it is separated by a
Cantor set and the group does not split. This shows that
Bowditch's theorem that characterizes splittings of hyperbolic
groups over 2-ended groups in terms of the boundary can not be
extended to splittings over more complicated subgroups.

\end{abstract}
\maketitle
\section{Introduction}

Let $G$ be a finitely generated group and let $H$ be a subgroup of
$G$. We say that $H$ is a co-dimension 1 subgroup if $C_G/H$ has
more than 1 end, where $C_G$ is the Cayley graph of $G$. If $G$
splits over $H$ then one easily sees that $H$ is co-dimension 1.
The opposite is not true, for example any closed geodesic on a
surface group gives a cyclic codimension 1 subgroup of the
fundamental group of the surface. On the other hand only simple
closed geodesics correspond to splittings.

The surface example can be generalized to $CAT(0)$ complexes to
produce examples of codimension 1 subgroups: If $X$ is a finite
$CAT(0)$ complex of (say) dimension 2 and if $R$ is a locally
geodesic track on $X$ then the subgroup of $G=\pi _1 (X)$
corresponding to $R$ is a codimension 1 free subgroup of $G$. Wise
(\cite {W}) has exploited this idea producing codimension 1
subgroups for small cancellation groups. In the setting of small
cancellation groups of course one needs some combinatorial analog
for the convexity property of geodesics (or tracks) and Wise
develops such a notion. Pride (\cite {Pr}) has shown that there
are small cancellation groups that have property FA, so such
groups have codimension 1 subgroups but do not split.

Stallings showed that if a compact set separates the Cayley graph
of a finitely generated group $G$, into at least two unbounded
components, then $G$ splits over a finite group. Bowditch (\cite
{B}) showed something similar for hyperbolic groups: if the
boundary $\bd G$ of a 1-ended hyperbolic group $G$ has a local cut
point, then the group splits over a 2-ended group, unless it is a
triangle group. There have been other generalizations of Stallings
theorem similar in spirit. The general idea is that if a `small'
set (coarsely) separates the Cayley graph of a group then the
group splits over a subgroup quasi-isometric to the `small set'.
For a precise conjecture see \cite{P}.

The main purpose of this paper is to show the limitations of this
`philosophy'. Given any $n>0$, we produce an example of a
hyperbolic group $G$, such that $dim(\bd G)>n$, $\bd G$ is
separated by a set of dimension 0 (a Cantor set) and $G$ has
property $FA$ (so it does not split over any subgroup). Our
example is based on Wise's construction which we generalize to the
setting of small cancellation theory over free products.

\section{Preliminaries}

\begin{Def} A diagram is a finite connected planar graph.
The faces of a diagram $D$ are the closures of the bounded
components of $\Bbb R ^2-D$.
\end{Def}
In what follows we assume always that each interior vertex (i.e.
not on $\bd D$) of a diagram has degree at least 3. We can always
achieve this by erasing all interior vertices of degree 2.

We will need some small cancellation results about diagrams shown
by McCammond and Wise in \cite{MW}. For the reader's convenience
and also because our setting is slightly different we include
these results here. These results strengthen classical small
cancellation results (see e.g. \cite{L-S}).

We need some notation: If $D$ is a diagram we denote by $\bd D$
the boundary of the unbounded component of $\Bbb R ^2-D$ (so if
$U$ is the unbounded component of $\Bbb R ^2-D$, $\bd D=\bar
U-U$). We say that the diagram is \textit{non singular} if $\bd D$
is homeomorphic to $S^1$. We say that an edge of $D$ is interior
edge if it does not lie on $\bd D$.

If $D$ is a diagram we denote by $E,F,V$ respectively the total
number of edges, faces and vertices of the diagram.

We denote by $E^\bullet , E^\circ $ respectively the number of
edges of the diagram that lie (do not lie) on $\bd D$. We denote
by $V^+$ the number of vertices on $\bd D$ that lie in exactly one
face and by $V^-$ the number of vertices on $\bd D$ that lie on
more than one face. We denote by $V^\circ $ the number of vertices
of $D$ that do not lie on $\bd D$.   We say that a diagram
verifies the $C(6)$ condition if every face of the diagram has at
least 6 sides. We have the following version of Greedlinger's
lemma (see \cite{L-S}):

\begin{Lem}\label{Greed} Let $D$ be a non singular diagram which verifies the
condition $C(6)$. Then $V^+\geq V^-+6$.
\end{Lem}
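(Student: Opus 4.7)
The plan is to combine Euler's formula, the $C(6)$ condition, and a lower bound on vertex degrees. I expect to reduce the lemma to producing two linear inequalities in $V^\circ, V^+, V^-, E^\circ$ whose combination yields $V^+ \ge V^- + 6$.

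First I would assemble three basic identities. Since $D$ is connected and planar with $F$ bounded faces plus one unbounded complementary region, Euler's formula reads $V - E + F = 1$. Because $D$ is non-singular, $\bd D$ is a cycle alternating boundary vertices and boundary edges, so $E^\bullet = V^+ + V^-$. Summing face perimeters and noting that interior edges contribute to two face-sides while boundary edges contribute to one gives $\sum_f n(f) = 2E^\circ + E^\bullet$. The hypothesis $C(6)$ then reads $6F \le \sum_f n(f)$; substituting the previous identities and simplifying produces the inequality
\[
4E^\circ + 6 \le 6V^\circ + V^+ + V^-.
\]

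Next I would bound $E^\circ$ from below using vertex degrees. Interior vertices satisfy $\deg \ge 3$ by the standing convention, and every boundary vertex trivially satisfies $\deg \ge 2$. The key local observation is that a vertex $v \in V^-$ in fact satisfies $\deg(v) \ge 3$: the two boundary edges at $v$ are separated by the exterior wedge, so if there were no interior edge at $v$ then only a single interior wedge would remain, and consequently $v$ would lie on a single face. Summing these bounds against $\sum_v \deg(v) = 2E = 2E^\circ + 2(V^+ + V^-)$ yields $2E^\circ \ge 3V^\circ + V^-$, i.e.\ $4E^\circ \ge 6V^\circ + 2V^-$. Substituting this into the inequality above cancels the $6V^\circ$ term and leaves $2V^- + 6 \le V^+ + V^-$, that is, $V^+ \ge V^- + 6$.

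The one step that is not mere bookkeeping is the degree bound for $v \in V^-$, which requires unpacking the local planar picture at a boundary vertex and observing that two distinct faces of $D$ meeting at $v$ force the presence of at least one edge beyond the two boundary ones. Once this is in hand the rest is just Euler plus the $C(6)$ face-size hypothesis.
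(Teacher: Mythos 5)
Your proof is correct and follows essentially the same route as the paper's: the $C(6)$ face-count bound $6F\leq 2E^\circ+E^\bullet$, Euler's formula, the identity $E^\bullet=V^++V^-$, and the degree bounds $\deg\geq 3$ on interior and $V^-$ vertices combined into the same final inequality. The only difference is cosmetic (you eliminate $E^\circ$ rather than $E$), and you in fact justify the degree bound at $V^-$ vertices more explicitly than the paper does.
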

\begin{proof}
We have the following inequalities:

$$ 6F\leq 2E^\circ + E^\bullet $$
This is because each face has at least 6 edges and each interior
edge lies in at most 2 faces while boundary edges lie in one face.

$$ 2E\geq 3V^\circ+3V^-+2V^+$$

This is because each edge has at most 2 vertices and each interior
edge has degree at least 3.

Using Euler's formula and the first inequality we obtain:

$$V=E-F+1 \geq E-\frac {E^\circ}{3}-\frac {E^\bullet}{6}+1=\frac
{2E}{3}+\frac {E^\bullet}{6}+1$$

We remark now that $$E^\bullet = V^-+V^+$$ Substituting $E^\bullet
$ above and using the second inequality for $E$ we obtain

$$V=V^-+V^++V^\circ \geq \frac {2}{3}(\frac {3}{2}V^\circ+\frac
{3}{2}V^-+V^+)+\frac {V^-+V^+}{6}+1\Rightarrow $$
$$ V^+\geq V^-+6$$
\end{proof}
We recall some definitions from \cite{MW}:
\begin{Def} Let $D$ be a non singular diagram. A face $F$ of the
diagram is called an $i$-spur if the intersection of $F$ with the
boundary of $D$ is connected and exactly $i$-edges of $F$ are
interior edges of $D$.
\end{Def}
\begin{Def}
We say that a diagram $D$ is a ladder if there are at most two
faces $F_1,F_2$ of $D$ such that $D-F_1,D-F_2$ are connected while
for every other face $F$ of $D$, $D-F$ has exactly 2 components.
\end{Def}
\begin{figure}[h]
\includegraphics[width=4.2in ]{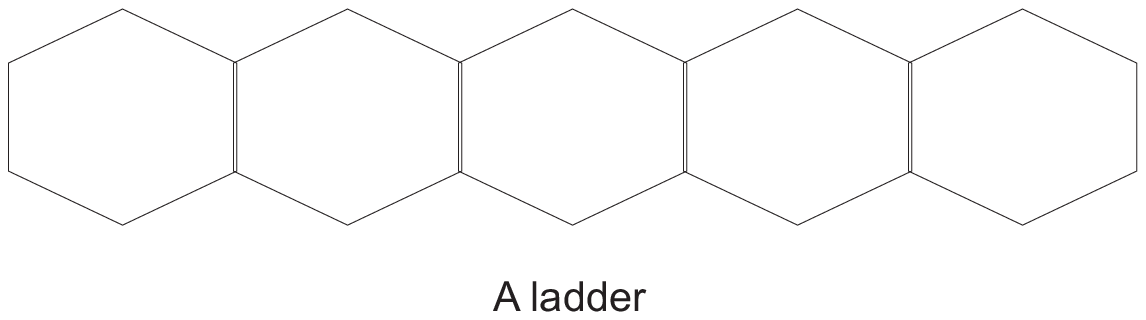}
\end{figure}
We have the following corollary from lemma \ref{Greed}:
\begin{Cor}\label{ladder} Let $D$ be a non singular disc diagram which is
$C(6)$ and contains no $3$-spurs and at most two $i$-spurs for
$i\leq 2$. Then either $D$ has a single region or it contains
exactly two $i$-spurs with $i\leq 2$ and it is a ladder.
\end{Cor}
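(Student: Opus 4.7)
My plan is to derive the corollary from Lemma \ref{Greed} by induction on the number of faces $n$ of $D$. The base case $n = 1$ gives the single region conclusion directly.

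For the inductive step I reformulate Lemma \ref{Greed} face by face. Writing $a(F)$ for the number of connected components of $F \cap \partial D$ and $b(F)$ for the number of boundary edges of a boundary face $F$, the counting identities $V^+ = \sum_F (b(F)-a(F))$ and $V^- = \sum_F a(F)$ (sums over boundary faces, using that each $V^-$ vertex is an arc endpoint of exactly two boundary faces) turn Lemma \ref{Greed} into
\[
\sum_{F \text{ boundary}} \bigl(b(F) - 2 a(F)\bigr) \;\geq\; 6.
\]
Combined with the $C(6)$ condition $b(F) + i(F) \geq 6$, where $i(F)$ counts interior edges of $F$, an $i$-spur contributes $b-2 \geq 4-i$ to this sum: 1-spurs contribute $\geq 3$, 2-spurs $\geq 2$, 3-spurs $\geq 1$, while faces with $a(F)\geq 2$ may contribute non-positively. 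Under the corollary's hypotheses (no 3-spurs, at most two small spurs), this curvature budget, together with analysis of how much the non-spur $a\geq 2$ faces can subtract, forces the existence of at least one small spur $F$ in every $D$ with $n \geq 2$ faces.

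Given such a spur, let $e$ be one of its interior edges, $G$ the face of $D$ on the other side of $e$, and $D' = D \setminus F$, erasing any newly created degree-2 interior vertices. Then $D'$ is a $C(6)$ non-singular disc diagram with $n-1$ faces, and the transition from $D$ to $D'$ is local to $G$: the edge $e$ is promoted from interior to boundary, so $i(G)$ drops by one and the arc pattern of $G \cap \partial D'$ is obtained from $G \cap \partial D$ by reclassifying $e$. A short case analysis on the position of $e$ along $\partial G$ (whether the neighbours of $e$ in $\partial G$ are boundary or interior edges of $G$) describes exactly how $a(G)$ changes and identifies the configurations in which the reclassification could turn $G$ into a 3-spur in $D'$; choosing $F$ carefully, or else exploiting the global hypothesis on $D$ to rule such configurations out, ensures $D'$ still satisfies the inductive hypotheses. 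By induction, $D'$ is either a single region --- in which case $D$ has two faces and is a trivial ladder of 1-spurs --- or a ladder; reattaching $F$ at an end-face of $D'$ extends the ladder by one face, whereas any attachment at an internal face of $D'$ would, upon reversing the removal, create either a 3-spur at $G$ or a third small spur, contradicting the hypotheses.

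The main obstacle is the local bookkeeping around $G$ combined with the careful selection of $F$: one must track how the arc pattern of $G$ evolves when $e$ is promoted to a boundary edge, handle the erasure of new degree-2 vertices, and use the global hypothesis on $D$ to rule out those bad $F$'s whose removal would manufacture a 3-spur in $D'$. Once these combinatorial subcases are dispatched, the inductive assembly is routine.
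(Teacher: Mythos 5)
Your overall strategy --- locate a small spur via the $V^+\geq V^-+6$ inequality of Lemma \ref{Greed} read face by face, delete it, and iterate --- is the same as the paper's, but two steps in your outline have real gaps. The first and more serious one is in the curvature budget itself. Your identity $V^+-V^-=\sum_F\bigl(b(F)-2a(F)\bigr)\geq 6$ is fine, but all your per-face estimates are \emph{lower} bounds ($b-2\geq 4-i$ for a spur), whereas what the argument needs are \emph{upper} bounds on the contribution of every face that is not a small spur. A face with $a(F)=1$, four or more interior edges and many boundary edges contributes $b(F)-2$, which is positive and unbounded; such faces can absorb the entire budget of $6$, so the inequality as you use it does not force the existence of any $i$-spur with $i\leq 2$. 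The paper deals with this by first normalizing the diagram: it erases free boundary vertices of each boundary face until that face has exactly $6$ edges (this changes neither the $C(6)$ condition, the spur types, nor the ladder property). After this trimming $b(F)=6-i(F)$, so each boundary face contributes exactly $6-i(F)-2a(F)$, which is $\leq 0$ unless $F$ is an $i$-spur with $i\leq 3$, and an $i$-spur contributes exactly $4-i\leq 3$. Only then does ``no $3$-spurs, at most two small spurs, total $\geq 6$'' force \emph{exactly two $1$-spurs} --- and note that you need exactly two, not merely ``at least one small spur'' as you claim, both because that is part of the conclusion and because the ladder iteration is driven by knowing where both ends are.

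The second gap is at the inductive step. You correctly worry that deleting a spur may turn its neighbour $G$ into a $3$-spur of $D'$, but neither of your proposed outs works: there is no freedom to ``choose $F$ carefully'' (there are only the two $1$-spurs, and the situation is symmetric), and the hypothesis on $D$ alone does not prevent $G$ from becoming a $3$-spur. Consequently you cannot run an induction on the literal statement of the corollary, since its hypotheses need not be inherited by $D'$. The paper's resolution is to abandon hypothesis preservation and instead reapply the counting inequality directly to $D'$: since the deleted face was a $1$-spur it met exactly one other face, so $D'$ has the surviving $1$-spur (contributing $3$) plus at most one new spur, and $V^+\geq V^-+6$ for $D'$ forces that new spur to contribute $\geq 3$, i.e.\ to be a $1$-spur again (or $D'$ to be a single face). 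Iterating this self-correcting step is what produces the ladder. With the trimming step added and the induction restructured along these lines, your argument becomes the paper's proof.
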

\begin{proof}
We modify $D$ as follows: if a face $F$ of $D$ has more than 6
edges and it intersects the boundary we erase successively
vertices of $F$ that do not lie on any other face till $F$ has 6
edges (or there are no more vertices to erase). Let's call $D_1$
the new diagram. $D_1$ is still a $C(6)$ diagram. $D_1$ contains
also no $3$-spurs and at most two $i$-spurs for $i\leq 2$. We
consider now a face $F$ of $D_1$ that intersects the boundary and
we see how it contributes to $V^+,V^-$. If $F$ is not an $i$-spur
then 2 vertices of $F$ contribute to $V^+$ while at least 4
vertices of $F$ contribute to $V^-$. So the total contribution of
all such faces to the difference $V^+-V^-$ is at most 0 (note that
the contribution is not necessarily negative as we count twice the
$V^-$ vertices as they lie in at least 2 faces). The contribution
of an $i$-spur to the difference $V^+-V^-$ is $4-i$.

 Since $D$
contains no 3-spurs and at most 2 $i$-spurs for $i\leq 2$, the
inequality
$$V^+- V^-\geq 6$$ implies that if $D_1$ has more than one face
then $D_1$ has exactly two 1-spurs, say $F_1,F_2$. If we erase
$F_1$ we obtain a diagram $D_2$ which is again $C(6)$. We note
that $F_1$ intersects exactly one face of $D_1$ so after erasing
it the diagram $D_2$ has still the other 1-spur of $D_1$ and at
most one new $i$-spur for some $i\leq 3$. By the inequality
$V^+\geq V^-+6$ again we conclude as before that either $D_2$ has
only one face or it has exactly two 1-spurs $F_2,F_3$. Inductively
we see that $D_1$ is a ladder hence $D$ is also a ladder.
\end{proof}

We will need a more technical result. If $v$ is a vertex in a
diagram we denote by $d_v$ the valency of $v$. The result below
will be used to show that small cancellation products of word
hyperbolic groups are word hyperbolic.
\begin{Lem}\label{isop} Let $D$ be a non singular diagram that verifies the
condition $C(7)$. Then
$$\frac {1}{3}\sum _{v\in D^{0}}\frac{d_v}{2}-\frac {2E^\circ
}{7}\leq V^\bullet +\frac {E^\bullet}{7}$$

In particular
$$F\leq 3E^\bullet+ 3V^\bullet$$ i.e. $D$ satisfies a
linear isoperimetric inequality.
\end{Lem}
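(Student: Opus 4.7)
I will imitate the proof of Lemma \ref{Greed}, but with the stronger hypothesis $C(7)$. Three ingredients enter the argument:

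(i) the face-side count $7F \leq 2E^\circ + E^\bullet$, gotten by summing the perimeters of the faces and noting that interior edges are counted twice while boundary edges are counted once;

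(ii) Euler's formula $V - E + F = 1$ for the non-singular disc diagram $D$, which, combined with (i) and $E = E^\circ + E^\bullet$, yields $5E^\circ + 6E^\bullet \leq 7V - 7 \leq 7V^\circ + 7V^\bullet$;

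(iii) the degree-sum inequality $2E = \sum_v d_v \geq 3V^\circ + 2V^\bullet$, using that every interior vertex has degree $\geq 3$ (the standing assumption) and that every boundary vertex has degree $\geq 2$, because $\partial D \cong S^1$ contributes two incident boundary edges at each such vertex.

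Solving (iii) for $V^\circ$, substituting into (ii), and clearing denominators gives
$$15E^\circ + 18E^\bullet \;\leq\; 14E + 7V^\bullet \;=\; 14E^\circ + 14E^\bullet + 7V^\bullet,$$
that is, $E^\circ + 4E^\bullet \leq 7V^\bullet \leq 21V^\bullet$. Since $\sum_{v\in D^0} d_v/2 = E = E^\circ + E^\bullet$, multiplying the displayed target inequality through by $21$ shows it is equivalent to $E^\circ + 4E^\bullet \leq 21V^\bullet$, so this step concludes the first assertion.

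For the \emph{in particular} clause, I feed $E^\circ \leq 21V^\bullet - 4E^\bullet$ back into (i): $7F \leq 2E^\circ + E^\bullet \leq 42V^\bullet - 7E^\bullet$, so $F \leq 6V^\bullet - E^\bullet$. Because $D$ is non-singular, $\partial D$ is a simple closed curve and therefore $V^\bullet = E^\bullet$, which turns this into $F \leq 5E^\bullet \leq 3E^\bullet + 3V^\bullet$, the claimed linear isoperimetric inequality. The argument is entirely elementary; the only real care required is in the bookkeeping — in particular, using the correct minimum-degree bound ($\geq 2$, not $\geq 3$) at boundary vertices, and allowing the constant $-7$ from Euler to be absorbed harmlessly since we only need $21V^\bullet$ rather than the tighter $7V^\bullet$ bound that this derivation actually produces.
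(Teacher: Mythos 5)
Your proof is correct and follows essentially the same route as the paper: the face--side count $7F\leq 2E^\circ+E^\bullet$, Euler's formula, and the degree bound at interior vertices, combined linearly (your extra use of $d_v\geq 2$ at boundary vertices and the reduction of the target inequality to $E^\circ+4E^\bullet\leq 21V^\bullet$ are just cleaner bookkeeping of the same computation). The derivation of the ``in particular'' clause by substituting $E^\circ\leq 21V^\bullet-4E^\bullet$ back into the face count and using $V^\bullet=E^\bullet$ is valid and in fact tidier than the chain of estimates in the paper.
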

\begin{proof}

We denote by $D^0$ the set of vertices of $D$ (the 0-skeleton).

Clearly
$$\sum _{v\in D^{0}}\frac{d_v}{2}=E\ \ \ \ \ \ \ (1)$$

 We have also the following inequality:

$$ 7F\leq 2E^\circ + E^\bullet $$
This is because each face has at least 7 edges and each interior
edge lies in at most 2 faces while boundary edges lie in one face.

Using Euler's formula and the inequality above we obtain:

$$E+1=V+F\leq  V^\bullet+\frac {E^\bullet}{7}+V^\circ+\frac {2E^\circ}{7}\ \ \ \ \ (2)$$
Since $d_v\geq 3$ for every $v$ in the interior of $D$:

$$\sum _{v\in D^{0}}\frac{d_v}{2}-V^\circ\geq \frac {1}{3}\sum _{v\in
D^{0}}\frac{d_v}{2} $$ By (1) and (2) we have

$$\frac {1}{3}\sum _{v\in D^{0}}\frac{d_v}{2}-\frac {2E^\circ
}{7}\leq V^\bullet +\frac {E^\bullet}{7}\ \ \ \ \ (3)$$

Since
$$\frac {1}{3}\sum _{v\in D^{0}}\frac{d_v}{2} \geq \frac {E^\circ
}{3} $$ we have

$$\frac {1}{3}\sum _{v\in D^{0}}\frac{d_v}{2}-\frac {2E^\circ
}{7} \geq \frac {E^\circ }{42} \geq \frac {3F}{7}-\frac {E^\bullet
}{84} $$ and using (3)

$$V^\bullet +\frac {2E^\bullet}{7}\geq  \frac {3F}{7}-\frac {E^\bullet
}{84}\Rightarrow F\leq 3E^\bullet+ 3V^\bullet$$

\end{proof}

\section{Small cancellation theory over free products}

Small cancellation theory can be developed over free products (see
\cite {L-S}). We show in this section that small cancellation
products have codimension 1 subgroups. This generalizes a result
of Wise (\cite {W}). We recall that the free product factors embed
in small cancellation products (\cite {L-S} cor. 9.4, p.278). Osin
(\cite{Os}, lemma 4.4) showed that free product factors embed
quasi-isometrically in small cancellation products (this also
follows from \cite{De}). For the reader's convenience we include a
proof of this below.

\begin{Def} Let $<S|R>$ be a presentation of a group $G$. We say
that $<S|R>$ is symmetrized if for any $r=y_1...y_n\in R$ all $n$
cyclic permutations of $r$ are also in $R$ and $r^{-1}$ is in $R$
too. We assume that all elements of $R$ are reduced words. If
$r_1=cb, r_2=ca$ and the words $cb,ca$ are reduced we call $c$ a
\textit{piece} of the presentation.
\end{Def}

Let now $<S|R>$ be a symmetrized presentation. We have then the
following small cancellation conditions:
\begin{itemize}
\item \textit{Condition} $C'(\lambda )$: If $r\in R$ and $r=cb$
with $cb$ reduced word and $c$ a piece then $|c|<\lambda |r|$.

\item \textit{Condition} $C(p)$: No element of $R$ is a product of
fewer than $p$ pieces.

 \item \textit{Condition} $B(2p)$: If $r=ab$ and $a$ is a product
 of $p$ pieces then $|a|\leq |r|/2$.
\end{itemize}

Wise showed in \cite{W} that groups that admit a presentation in
which all relators have even length and condition $B(6)$ is
satisfied, have codimension 1 subgroups. Clearly condition $C'(1/6
)$ is stronger than condition $B(6)$ so Wise's result holds for
these groups too.

We recall now how that the small cancellation conditions can be
given for free products too (\cite {L-S} ch V, sec. 9). Let $G$ be
the free product of the groups $A_i$.

We say that a word $a_1...a_n$ is reduced if each $a_j$ represents
an element of one of the $A_i$ and $a_j,a_{j+1}$ belong to
different factors for any $j$. Any element $g\in G$ can be
represented in a unique way as a reduced word (normal form of
$g$). If $g=a_1...a_n$ is the normal form of $g$
 we define $\|g\|=n$. If
$u=a_1...a_n,v=b_1...b_k$ are reduced words we say that the word
$uv=a_1...a_nb_1...b_k$ is \textit{semi-reduced} if $a_nb_1\ne e$.
Note however that $a_n,b_1$ might lie in the same factor. We say
that a word $w=a_1...a_n$ is \textit{weakly cyclically reduced} if
it is reduced and $a_n^{-1}a_1\ne e$. We say that a sequence of
words $R$ is symmetrized if whenever $r\in R$ all weakly
cyclically reduced conjugates of $r$ and $r^{-1}$ are in $R$. We
say that $c$ is a \textit{piece} if there are distinct $r_1,
r_2\in R$ such that $r_1=ca,r_2=cb$ and the words $ca,cb$ are
semi-reduced. As before we have the condition $C'(\lambda )$:

\textit{Condition} $C'(\lambda )$: If $r\in R$ and $r=cb$ with
$cb$ semi-reduced word and $c$ a piece then $\|c\|<\lambda \|r\|$.

Let now $F$ be a free product $F=*A_i$ and let $R$ be a
symmetrized subset of $F$. The group $G$ defined by the free
product presentation $<F|R>$ is the quotient $$G=F/<<R>>$$ where
$<<R>>$ is the normal closure of $R$ in $F$.

 We show now that if a group $G$ has a free product presentation
  $<F|R>$ that satisfies the $C'(1/6)$ condition then
$G$ has a codimension 1 subgroup. We start first by considering
van-Kampen diagrams over $G$. We consider a usual presentation of
$G$ with a set of generators $S$ given by the generators of
$A_i's$ and a set of relators consisting of relators of the
$A_i'$s together with a set $R'$ such that $R$ is obtained by
taking all weak cyclic conjugates of elements in $R'$ and their
inverses. If $R'$ is finite we say that $G$ has a \textit{finite
free product presentation.} Let now $w$ be a word in $S$
representing the identity in $G$. Let $D$ be a reduced Van-Kampen
diagram for $w$ for the presentation given above. We remark that
if $p$ is a simple closed path in the 1-skeleton of $D$ such that
all edges of $p$ lie in a single factor $A_i$ then the word
corresponding to $p$ represents the identity in $A_i$ (see
\cite{L-S}, cor. 9.4). Call such a simple closed path maximal if
there is no other such simple closed path $q$ in the interior of
$p$. We modify now the diagram $D$ as follows: For each maximal
simple closed path $p$ we erase all edges of $p$ and all edges of
$D$ inside $p$ and we introduce a new vertex $v_p$ which we join
with all vertices of $p$. Now each edge $e$ of $p$ has been
replaced by two edges $e_1,e_2$. We label $e_1,e_2$ by elements of
$A_i$ so that the product of their labels is equal to the label of
$e$. This is clearly possible since $p$ corresponds to the trivial
element in $A_i$. We are allowed here to label an edge by the
identity. After this operation some of the edges of $D$ are
'subdivided'. We subdivide the rest of the edges of $D$ so that
the labels of the new edges lie in the same factor as the old ones
and the product of their labels is equal to the label of the old
edge. We call this diagram van-Kampen diagram over the free
product.

We remark now that the $C'(\lambda )$ condition holds for this new
diagram, i.e. if $R_1,R_2$ are adjacent regions of the diagram
then
$$length (R_1\cap R_2)\leq \lambda \min (length (\bd R_1),length
(\bd R_2))$$

 \begin{Thm}\label{codim} Let $G$ be a finitely generated group with a finite free product
 $C'(1/6)$ presentation $<F|R'>$. Assume further that all $r\in
 R'$ are cyclically reduced words and $\|r\|$ has even length. Then $G$ has a codimension 1 subgroup.
 \end{Thm}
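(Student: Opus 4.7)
The plan is to adapt Wise's hypergraph (``wall'') construction from \cite{W} to the free-product setting, exploiting the van Kampen diagrams over free products defined just before the statement. Let $X$ be the 2-complex whose 2-cells correspond to the relators $r \in R'$, each bounded by $\|r\|$ edges, an even number by hypothesis. In every 2-cell $f$ coming from $r$ with $\|r\| = 2n$, draw $n$ disjoint chords, each connecting the midpoints of a pair of antipodal edges of $\bd f$. Across every 1-cell the two chord ends glue, producing a 1-complex $\Lambda$ on $X$; lifting to the universal cover $\tilde X$ gives a $G$-invariant 1-complex $\tilde\Lambda \subset \tilde X$, whose connected components we call \emph{hypergraphs}. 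Fix such a hypergraph $H$ and let $G_H$ be its set-wise stabiliser; this will be the codimension 1 subgroup.

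It suffices to establish (a) $H$ is embedded in $\tilde X$ (no chord-path closes up, and two chords sharing a midpoint lie in the same 2-cell), and (b) $\tilde X \setminus H$ has at least two components, each containing points arbitrarily far from $H$.

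Condition (a) is the heart of the argument. Suppose, for contradiction, that $H$ self-intersects; choose a minimal reduced disk diagram $D$ in $\tilde X$ whose boundary is composed of two hypergraph segments meeting only at their two endpoints, which lie in the interiors of two 1-cells of $\bd D$. Because van Kampen diagrams over the free product inherit the $C'(1/6)$ condition, as emphasised in the paragraph preceding the theorem, and $C'(1/6)$ forces $C(7)$ and a fortiori $C(6)$, the diagram $D$ satisfies the hypotheses of Corollary \ref{ladder}: there are no 3-spurs, and only the two faces containing the endpoints of the hypergraph segments can be $i$-spurs with $i \leq 2$. Hence $D$ is either a single 2-cell or a ladder. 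A single 2-cell is impossible because two antipodal chords in one polygon cannot form a closed curve meeting itself only at two midpoints. In the ladder case, one tracks the two hypergraph segments through the successive rungs of the ladder; because each chord connects antipodal edges, on at least one internal 2-cell of the ladder the arc of $\bd D$ on that face has combinatorial length $\geq \|r\|/2$, which would be a piece, contradicting $C'(1/6)$.

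Given (a), the embedded chord tree $H$ separates $\tilde X$ into at least two components; the depth in (b) is ensured by the quasi-isometric embedding of the free factors noted before the theorem (Osin \cite{Os}, \cite{De}): an orbit of any factor $A_i$ meets each bounded neighbourhood of $H$ in a bounded set, so escapes to infinity on both sides. Finally, $G$ acts cocompactly on $\tilde X$ with finitely many orbits of hypergraphs, so $G_H$ acts cocompactly on $H$; the coset graph $C_G/G_H$ therefore has at least two ends, corresponding to the two deep components of $\tilde X \setminus H$, which by definition makes $G_H$ a codimension 1 subgroup. The principal obstacle is the embedding step (a): one must check that $D$ genuinely satisfies the spur hypotheses of Corollary \ref{ladder}, and, in the ladder case, that the antipodal-chord condition translates into a piece of length $\geq \|r\|/2$ contradicting $C'(1/6)$; the rest is standard Wise-style bookkeeping transferred to the free-product framework.
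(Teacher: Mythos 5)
Your plan is Wise's original wall construction: chords joining midpoints of antipodal \emph{edges}, glued across shared 1-cells into hypergraphs, with the stabiliser of a hypergraph as the candidate subgroup. The paper explicitly abandons this in the free-product setting and instead joins opposite \emph{vertices} (syllable breaks) of the relator polygons by diagonals, forming a bouquet of circles $\Gamma$ through the base point; the subgroup $H$ is then generated by the half-relators $a_1\cdots a_n$, and the separating object is a tree $\tilde\Gamma$ of diagonals built from an equivalence relation on vertices defined by the pair of free factors meeting there. The reason for the change is a genuine obstruction to your construction: a side of the polygon for $r=a_1\cdots a_{2n}$ is a whole syllable $a_i\in A_{j}$, realized in $\tilde K$ as a path whose length depends on $a_i$, and two adjacent 2-cells overlap along a \emph{piece}, which is only semi-reduced — their syllable decompositions along the overlap need not match, so the ``midpoints'' seen from the two sides are different points and the chord ends do not glue. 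Your $\tilde\Lambda$ is therefore not well defined as a $G$-invariant 1-complex, and this is not a detail one can patch by bookkeeping; it is the step the paper's vertex/diagonal device is designed to circumvent. Your embeddedness argument via Corollary \ref{ladder} is the right idea and parallels the paper's proof that $\tilde\Gamma$ is a tree, but note that before the corollary can be applied one must replace the wall segments by honest boundary paths carrying \emph{reduced} words of $F$ (the paper uses the two available boundary arcs of each cell, whose labels begin in different free factors, to keep the concatenation reduced); you flag this verification as an obstacle rather than carrying it out.

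The second gap is the ``deep on both sides'' step. Quasi-isometric embeddedness of the factors $A_i$ does not give this: nothing guarantees that an $A_i$-orbit has points on both sides of your wall, so ``escapes to infinity on both sides'' is unsupported, and this is exactly the content of codimension one rather than a formality. The paper proves it by an explicit construction: starting from each side of a separating diagonal it builds an infinite path $p_1p_2\cdots$ by concatenating \emph{pieces} of relators (initial segments of normal forms of length about $\|r\|/6$), chosen from a new 2-cell at each step so that the resulting word is reduced in $F$; applying Corollary \ref{ladder} to the diagram bounded by this path, a modified geodesic in $\tilde\Gamma$, and a geodesic to $\tilde\Gamma$ shows the diagram is a ladder and hence that the endpoint's distance to $\tilde\Gamma$ tends to infinity. (A separate case analysis is needed according to whether the geodesic in $\tilde\Gamma$ passes through the second endpoint of the separating diagonal.) Without an argument of this kind — and without a well-defined wall to separate with — the proposal does not yet yield the theorem.
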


\begin{proof}
We construct a complex for $G$ as usual. If $F=*A_i$ and $K_i$ are
complexes with a single vertex $x_i$ such that
$\pi_1(K_i,x_i)=A_i$ we take the wedge product of the $K_i'$s
identifying all $x_i$'s. For each $r\in R'$ we glue a 2-cell to
$\vee K_i$ in the obvious way to obtain a complex $K$ such that
$\pi _1(K,x)=G$. We argue now in a way similar to Wise (\cite
{W}). We slightly change approach and we consider bouquets of
circles that go through $x$ rather than tracks. We explain now how
we construct a bouquet of circles $\Gamma $ which will give the
codimension 1 subgroup.

 Let
$r=a_1...a_{2n}$ be the normal form in $F$ of $r\in R'$. We
represent the 2-cell $c(r)$ corresponding to $r$ as a polygon
where the $a_i's$ are the labels of the sides of this polygon. The
bouquet of circles has a single vertex $x$ and a set of edges
corresponding to `diagonals' of these polygons. We fix $r\in R'$
as above we pick the diagonal joining the beginning of the
$a_1$-edge to the vertex opposite to it, i.e. the end of the
$a_{n}$ edge.

 We remark now that since $c(r)$ has an even number of
sides each vertex has a vertex opposite to it, so we associate to
this vertex the diagonal joining it with the opposite vertex. We
remark that any vertex is determined by the edges adjacent to it.
For example the beginning of the $a_1$ edge is the vertex
corresponding to the consecutive edges $a_{2n},a_1$. We consider
now the equivalence relation on vertices of the $c(r)'s$ generated
by the following relation: The vertex $b_i,b_{i+1}$ of $c(r_1)$ is
equivalent to the vertex $c_j,c_{j+1}$ of $c(r_2)$ if
$b_i,b_{i+1}$ lie in the same free factors as $c_j,c_{j+1}$. We
remark that $r_1$ might be equal to $r_2$ in this definition.

 Now for each $r$ we
consider all vertices equivalent to the vertices of the chosen
diagonal. We add to the bouquet of circles all diagonals
corresponding to these vertices and we call the graph obtained in
this way $\Gamma $. We remark that $\Gamma $ is a bouquet of
circles if we see it as an abstract graph but if we see it as
immersed in $K$ its edges are likely to intersect each other in
the middle point of the polygons.

 $\Gamma $ corresponds to a subgroup of $G$.
Indeed each diagonal gives a generator, for example the diagonal
joining $a_1,a_n$ gives the generator $a_1a_2...a_n$. Let's call
this subgroup $H$. We will show that $H$ is a codimension 1
subgroup of $G$.

\begin{Lem} There is a tree $\tilde \Gamma \subset \tilde K$ which
has as edges diagonals of 2-cells which is invariant under $H$.
$\tilde \Gamma $ separates $\tilde K$ in at least 2 components.
\end{Lem}
\begin{proof}
Let $v\in \tilde K$ be a vertex. We define now a connected graph
in $\tilde K$ as follows: We say that two vertices are related if
they are opposite. We take the equivalence relation generated by
this relation and we consider the equivalence class of $v$. Let
$\tilde \Gamma $ be the graph obtained by joining opposite
vertices in this equivalence class by diagonals. We claim that
$\tilde \Gamma $ is a tree. If it is not a tree then there is a
path $p$ in $\tilde \Gamma $ such that both endpoints of $p$ lie
on the same 2-cell of $R'$ and $p$ is not a single edge (a
diagonal). Let's say $a,b$ are the endpoints of $p$ and they lie
on a 2-cell $\sigma $. Let $q$ be a path on $\bd \sigma $ joining
$a,b$. We may assume $q$ to have minimal normal form length in the
free product among the 2 possible paths. Now $p\cup q$ is a closed
loop. We change now $p$ by replacing each diagonal by the
corresponding path on the boundary on which the diagonal lie. We
note that we have two choices and we replace the diagonals so that
the path we obtain by replacing all of them corresponds to a
reduced word of $F$. Let $p'$ be the path we obtain in this way.
We may arrange also that $p'\cup q$ is reduced at the vertex $a$
(unless $a=b$). We consider now the van-Kampen diagram over the
free product for $p'\cup q$ and we remark that if it has an
$i$-spur for $i\leq 2$ then the boundary of this $i$-spur contains
a neighborhood of the vertex $b$. But this contradicts Corollary
\ref{ladder}. It follows that $\tilde \Gamma $ is a tree. By
construction $\tilde \Gamma $ is invariant under $H$ and separates
locally (hence also globally) $\tilde K$.

\end{proof}

The first part of the next lemma follows also from work of Osin
(\cite{Os}, see also \cite{De}). We include a proof here for the
sake of completeness.
\begin{Lem} The vertex groups $A_i$ and $H$ embed quasi-isometrically in
$G$. $H$ is a codimension 1 subgroup of $G$.
\end{Lem}
\begin{proof}
Let $a$ be a geodesic word in the Cayley graph of $A_i$. We will
show that $a$ is a quasi-geodesic in $\tilde K$. Let $S$ be the
generating set of $G$ and let $|w|$ be the length of a word in
$S$. Let
$$M=\max \{|r|:r\in R' \}$$
We define a new length function $L$ for words of $S$:
$$L(w)=M\|w\|+|w|$$

It is clear that an $L$-geodesic is a quasi-geodesic.

Indeed let $p$ be a geodesic in the 1-skeleton of $\tilde K$ with
respect to the length function $L$ with the same endpoints as $a$.
We consider the van-Kampen diagram over the free product for
$a\cup p$. We may assume that $a\cap p$ is equal to the endpoints
of $a,p$ since along the intersection of $a,p$, $a$ is
quasi-geodesic.

We remark now that this diagram has at most 2 $i$-spurs (for
$i\leq 2$), corresponding to the endpoints of $p$ so by corollary
\ref{ladder} this diagram is a ladder. By considering now the
usual van-Kampen diagram for $a\cup p$ we have that $|a|\leq M|p|$
so $a$ is quasi-geodesic.

We prove now that $H$ is quasi-isometrically embedded. Since $H$
acts freely on $\tilde \Gamma $ it is enough to show that $\tilde
\Gamma $ is quasi-isometrically embedded. Let $p$ be a geodesic
path in $\tilde \Gamma $ joining two vertices $v,u$ of $\tilde
\Gamma $.

We change $p$ by replacing each diagonal by the corresponding path
on the boundary of the cell on which lies the diagonal. We pick
this path so that the word of $F$ corresponding to the path is
reduced. Let $p'$ be the path we obtain in this way. Let $q$ be an
$L$-geodesic path joining $v,u$. As before we may assume that
$p',q$ intersect only at their endpoints. Again by the definition
of $p',q$ if we consider the van-Kampen diagram over the free
product for $p'\cup q$ we remark that it has at most 2 $i$-spurs
for $i\leq 2$. Hence this diagram is a ladder. By considering the
usual van-Kampen diagram we have that $length (p')\leq M\,
length(q)$. Since $q$ is a quasi-geodesic we have that $p'$ is a
quasi-geodesic, so $H$ is quasi-isometrically embedded.

Finally we show that $H$ is a codimension 1 subgroup. It suffices
to show that $\tilde \Gamma $ separates $\tilde K$ in at least 2
components which are not contained in a finite neighborhood of
$\tilde \Gamma $. By its definition $\tilde \Gamma $ separates
locally $\tilde K$. Since $\tilde K$ is simply connected, $\tilde
\Gamma $ separates $\tilde K$.

We introduce now some useful terminology. Let $r\in R'$ and let
$c_1c_2...c_n$ be the normal form of $r$ in $F$. Recall that $r$
is cyclically reduced. Let $k$ be smallest such that
$$\|c_1...c_k\|\geq \frac {n}{6}$$ We say then that $c_1...c_k$
is a \textit{piece} of $r$. Similarly we define pieces of all
cyclic permutations of $c_1c_2...c_n$.

 Let $R $ be a 2-cell in $\tilde K$ which intersects $\tilde
\Gamma $ on an edge $e$. Let $v,u$ be the vertices of $e$.

Let $c_1c_2...c_n$ be the label of $R$ starting from $v$ and
written in free product normal form. Let $s$ be the vertex
corresponding to the endpoint of a piece $p_1=c_1...c_k$ of $R$
starting at $v$. We construct a path starting from $v$ and lying
in the same component of $\tilde X-\tilde \Gamma $ as $s$. The
path starts by $p_1$. At $s$ we continue $p_1$ by a piece $p_2$ of
another 2-cell $R_2$ corresponding to $r_2\in R'$. We pick $R_2\ne
R$ and so that $p_1p_2$ is reduced in $F$. We continue inductively
in the same way picking each time a new 2-cell and a piece so that
the word we obtain is reduced in $F$. Let $\beta=p_1...p_n$ be the
path we obtain after $n$ steps. If $s_n$ is the endpoint of
$p_1...p_n$ we claim that $d(s_n,\tilde \Gamma)\to \infty $ as
$n\to \infty$. Indeed let $q$ be a geodesic joining $s_n$ to a
closest vertex $t\in \tilde \Gamma $. We consider a geodesic
$\gamma$ in $\tilde \Gamma $ joining $v$ to $t$.

We distinguish now two cases. Assume first that $u$ does not lie
on $\gamma $. We change $\gamma $ by replacing each diagonal by
the corresponding path on the boundary on which the diagonal lie
to obtain a path $\gamma '$. We make these replacements so that
the word of $F$ corresponding to the new path is reduced and
$p_1^{-1}\gamma '$ corresponds also to a reduced word in $F$.
Clearly this is possible since we have two choices for replacing
each diagonal and the normal form of each starts from a different
free factor. We consider now the loop $$\beta \cup \gamma '\cup
q$$ Since $q$ is geodesic the van-Kampen diagram for free products
for this loop has at most 2 $i$-spurs with $i\leq 2$ which appear
around the endpoints of $q$. It follows that this diagram is a
ladder (see corollary \ref{ladder}) hence the lengths of $q$ and
$\beta \cup \gamma '$ are comparable so the length of $q$ goes to
infinity as $n\to \infty $.

We deal now with the second case, i.e. we assume that $u$ lies on
$\gamma $. We modify then $p_1...p_n$ as follows. We replace $p_1$
by the path $q_1$ on the boundary of $R$ joining $s$ to $u$. We
note that the new path $\beta '=q_1p_2...p_n$ might not be reduced
at the endpoint of $q_1$. We replace $\gamma $ by a path $\gamma
'$ in the 1-skeleton of $\tilde K$ as before so that
$q_1^{-1}\gamma '$ is reduced in the free product $F$. We remark
that the van-Kampen diagram over free products for the loop
$$\beta '\cup \gamma '\cup q$$ is a ladder in this case too hence
the length of $q$ goes to infinity as $n\to \infty $.

Similarly we construct we see that the component of $R-\tilde
\Gamma $ that does not contain $v$ is not contained in a finite
neighborhood of $\tilde \Gamma $. It follows that $H$ is
co-dimension 1.
\end{proof}
\end{proof}
\section {The example}
\begin{Thm} Given any $n>0$ there is a one-ended hyperbolic group
$G$ such that \begin{itemize}
\item $dim \bd G\geq n$
\item $\bd G$ is separated by a Cantor set.
\item $G$ does not split.
\end{itemize}
\end{Thm}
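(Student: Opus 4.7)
The aim is to realize $G$ as a small cancellation quotient of a free product $F = A_1 * A_2 * \cdots * A_k$ of hyperbolic groups, using the machinery of Section~3.

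First, choose the factors. To arrange $\dim\partial G \geq n$, take $A_1$ to be a cocompact lattice in $\HH^{n+1}$, so that $\partial A_1 \cong S^n$. Pick further factors $A_2,\dots,A_k$ to be suitable auxiliary hyperbolic groups, rich enough to yield a codimension 1 subgroup of rank at least two from Theorem~\ref{codim}.

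Second, choose a symmetrized set $R'$ of cyclically reduced words in $F$, each of even normal form length, satisfying the free product $C'(1/7)$ condition. Then Lemma~\ref{isop} supplies a linear isoperimetric inequality, so the quotient $G = F/\langle\langle R'\rangle\rangle$ is word hyperbolic. Next, choose $R'$ in the spirit of Pride's construction, so that the relators intertwine all factors densely enough to force every isometric $G$-action on a simplicial tree to have a fixed vertex; then $G$ has property $FA$, and in particular does not split over any subgroup. Since $A_1$ embeds as an infinite subgroup and $G$ has no splittings over finite subgroups, Stallings' theorem forces $G$ to be one-ended.

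Third, Theorem~\ref{codim} applied to this presentation produces a codimension 1 subgroup $H\leq G$ that is abstractly free of rank at least two. By the quasi-isometric embedding lemma of Section~3, $H$ is quasi-isometrically embedded, so its limit set $\Lambda H \subset \partial G$ is a Cantor set. Since $H$ is codimension 1 and quasi-convex, a standard argument (the deep complementary components in the Cayley graph determine disjoint open subsets of $\partial G$ on either side of $\Lambda H$) shows that $\Lambda H$ separates $\partial G$. Likewise $A_1$ is quasi-isometrically embedded, so the boundary sphere $\partial A_1 \cong S^n$ embeds topologically in $\partial G$, giving $\dim\partial G \geq n$.

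The main obstacle is step two: simultaneously achieving $C'(1/7)$ and property $FA$. The $C'(1/7)$ condition requires relators to be long and pairwise share only small pieces, whereas property $FA$ demands enough relators entangling all factors of $F$ to preclude any Bass--Serre decomposition of $G$. Transplanting Pride's combinatorial $FA$ construction to the free product setting, while respecting the even-length hypothesis of Theorem~\ref{codim}, is the technical heart of the argument.
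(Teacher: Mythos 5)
There is a genuine gap, and you have located it yourself: the entire burden of the theorem rests on simultaneously achieving the small cancellation condition and property $FA$, and your proposal defers exactly this step ("the technical heart") without carrying it out. Worse, the route you propose is the wrong one. Transplanting Pride's relators into a free product presentation that must also satisfy $C'(1/7)$ (or $C'(1/6)$), have all relators of even normal-form length, and still leave room for the relators needed to control the boundary, is not a routine adaptation; nothing in Section 3 or in Pride's paper gives it to you. The paper avoids this problem entirely by a different choice of building blocks: it takes $A$ to be a torsion-free one-ended hyperbolic group with property $(T)$ and $\dim\partial A\geq n$ (a lattice in $Sp(n,1)$), sets $B$ equal to a second copy of $A$, and uses the completely explicit relators $r_{i,j}=(a_ib_j)(a_ib_j^2)(a_ib_j^3)(a_ib_j^4)$. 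Property $(T)$ of the factors does the $FA$ work for free: the abelianization of $G$ is finite, ruling out HNN extensions, and in any splitting $G=X*_CY$ each factor $A$, $B$ fixes a vertex, after which a normal-form computation with the $r_{i,j}$ forces $C$ to contain $A$ or $B$ and the splitting to be trivial. No delicate combinatorial interleaving of "FA relators" with "small cancellation relators" is needed.

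Your specific choice of $A_1$ a cocompact lattice in $\HH^{n+1}$ also blocks this repair: real hyperbolic lattices do not have property $(T)$, need not have property $FA$, and may well have infinite abelianization, in which case $G$ could surject onto $\Z$ and be an HNN extension no matter what relators you add. The rest of your outline (hyperbolicity via Lemma \ref{isop}, the codimension 1 quasiconvex free subgroup from Theorem \ref{codim} whose limit set is a separating Cantor set, one-endedness from $FA$ plus Stallings, and $\dim\partial G\geq n$ from the quasi-isometrically embedded factor) matches the paper's argument in substance; but as written the proposal proves the two easy bullet points and leaves the third, which is the actual content of the theorem, as an open problem.
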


\begin{proof} Let $A$ be a torsion free 1-ended hyperbolic group with property $T$ and such that
$dim(\bd A)\geq n$ (eg a lattice in $Sp(n,1)$). Let's say
$A=<a_1,...,a_k>$. We may assume that $a_i^m\ne a_j^r$ for any
$i\ne j$ and $m,r>0$. We take now another copy of $A$. For
notational convenience we denote the second copy by $B$ and its
generators by $<b_1,...,b_k>$. We consider now the free product
$A*B$ and we define $G$ to be the small cancellation quotient of
$A*B$ given by the relations:
$$r_{i,j}=(a_ib_j)(a_ib_j^2)(a_ib_j^3)(a_ib_j^4)\ \ \ 1\leq i,j \leq k$$
By theorem \ref{codim} of the previous section $G$ has a free
codimension 1 subgroup $H$. As we showed in the proof of the
theorem $H$ is quasi-isometrically embedded so a Cantor set
separates $\bd G$. We show now that $G$ has property $FA$ (i.e. it
does not split). Clearly $G$ is not an $HNN$ extension since the
abelianization of $A$ is trivial, so the abelianization of $G$ is
trivial. We show now that $G$ does not split as an amalgamated
product. Let's say $G=X*_CY$. Without loss of generality we may
assume that $A\subset X$ and $B\subset gXg^{-1}$ or $B\subset
gYg^{-1}$. Let $g=x_1...x_n$ be the normal form of $G$ in the free
product decomposition. By replacing $A,B$ by conjugates we may
assume that either $g=1$ and $B\subset Y$ or $x_1\notin X$.
However we see then that the word
$$r_{i,j}=(a_ib_j)(a_ib_j^2)(a_ib_j^3)(a_ib_j^4) $$ is reduced in $X*_CY$
 unless $a_i$ or $b_j$ is in $C$.
As all $r_{i,j}$ are equal to the identity this implies that $A=C$
and $B$ contained in $Y$ or $B=C$ and $A$ contained in $X$ but in
both cases, the splitting would be trivial.

We claim finally that $G$ is hyperbolic. Indeed this follows by
lemma 4.4 of \cite{Os}, and \cite{De}. For the reader's
convenience we sketch a proof here using lemma \ref{isop}. It is
enough to show that $G$ satisfies a linear isoperimetric
inequality. Let $w$ be a word on the generators of $G$ and let $D$
be a reduced van-Kampen diagram for $G$. As we describe in section
3 one obtains from $D$ a new diagram, let's say $D_1$, which is
called the diagram for $w$ over the free product. Since $A,B$ are
hyperbolic they satisfy some isoperimetric inequality of the form
$$A(p)\leq K\l(p)$$
for any simple closed path $p$ in the Cayley graph of $A$ or of
$B$.

It follows that if $p$ is a simple closed path of $D$ such that
all edges of $p$ lie in $A$ (or in $B$) and if $v$ is the vertex
of $D_1$ that we obtain by collapsing $p$ to a point then
$$d_v=l(p)\geq \frac {1}{K}A(p)$$
where $d_v$ is the degree of $v$. It follows that
$$A(D)\leq A(D_1)+K\sum _{v\in D_1^0}d_v$$

From lemma \ref{isop} we have the following inequality for the
diagram $D_1$:
$$\frac {1}{3}\sum _{v\in D_1^{0}}\frac{d_v}{2}-\frac {2E^\circ
}{7}\leq V^\bullet +\frac {E^\bullet}{7}$$

We have
$$\sum _{v\in D_1^{0}}\frac{d_v}{2} \geq E^\circ
 \Rightarrow \frac {2E^\circ }{7} \leq \frac {2}{7}\sum _{v\in
D_1^{0}} \frac{d_v}{2}$$

so
$$\frac {1}{3}\sum _{v\in D_1^{0}}\frac{d_v}{2}-\frac {2E^\circ
}{7}\geq \frac {1}{42} \sum _{v\in D_1^{0}}\frac{d_v}{2}$$

We have also $l(\bd D_1)\leq l(\bd D)$ and $V^\bullet, E^\bullet
\leq l(\bd D)$. So from lemma \ref{isop} we have
$$A(D_1)\leq 6l(\bd D)$$

and

$$\sum _{v\in D_1^0}d_v \leq 42 V^\bullet +7E^\bullet $$
so
$$A(D)\leq (6+49K)l(\bd D)$$
In other words $G$ verifies a linear isoperimetric inequality, so
it is hyperbolic.

\end{proof}

\begin{Rem} The above example shows also that for any $n$
there is a finitely presented group $G$ with $asdim\,G>n$ which is
separated coarsely by a uniformly embedded set $H$ of $asdim\,H=1$
and which does not split. This answers a question in \cite{P}.
\end{Rem}

\end{document}
\bye